\documentclass[11pt]{amsart}

\usepackage{amsmath}
\usepackage{amssymb}
\usepackage{latexsym}
\usepackage{url}

\author{David M. Ambrose}
\address{Department of Mathematics, Drexel University, Philadelphia, PA, USA}
\title[Mean field games with non-separable Hamiltonians]
{Strong solutions for time-dependent mean field games with non-separable Hamiltonians}

\oddsidemargin=0in
\evensidemargin=0in
\topmargin=-.5in
\textwidth=6.5in
\textheight=9in

\newtheorem{theorem}{Theorem}

\begin{document}

\begin{abstract} We prove existence theorems for strong solutions of time-dependent 
mean field games with non-separable
Hamiltonian.
In a recent announcement, we showed existence of small, strong solutions for
mean field games with local coupling.  We first generalize that prior work  
to allow for non-separable Hamiltonians.  This proof is inspired by the work of Duchon and Robert on the 
existence of small-data vortex sheets in incompressible fluid mechanics. 
Our next existence result is in the case of weak coupling of the system; that is, we
allow the data to be of arbitrary size, but instead require that the (still possibly non-separable) Hamiltonian be small in a
certain sense.  The proof of this theorem relies upon an appeal to the implicit function theorem.
\end{abstract}

\maketitle

\section{Introduction}
We consider the following coupled system \cite{cardNotes}, known as the mean field games system:
\begin{equation}\label{uEquation}
u_{t}+\Delta u + \mathcal{H}(t,x,Du,m)=0,
\end{equation}
\begin{equation}\label{mEquation}
m_{t}-\Delta m  + \mathrm{div}(m\mathcal{H}_{p}(t,x,Du,m))=0.
\end{equation}
The independent variables $(t,x)$ are taken from $[0,T]\times\mathbb{T}^{n},$ for some $T>0$ and $n\geq1.$
The function $\mathcal{H}$ is known as the Hamiltonian, $m$ is a probability measure, and $u$ is a value function;
the interpretation of $u$ depends on the particular application.  The notation $\mathcal{H}_{p}$ denotes
$\displaystyle\frac{\partial}{\partial p}\mathcal{H}(t,x,p,m).$  This will be supplemented by the (temporal) boundary 
conditions
\begin{equation}\label{BCs}
m(0,x)=m_{0}(x),\qquad u(T,x)=G(m(T,\cdot)).
\end{equation}
The operator $G$ is known as the payoff function.

Mean field games have been introduced by Lasry and Lions \cite{lasry1}, \cite{lasry2}, 
\cite{lasry3} as a limiting approximation for problems from game theory with a large number of agents.
Two recent surveys of the theory and applications of mean field games systems are \cite{gomesSurvey}
and \cite{meanField2}.
In all prior existence and uniqueness theorems for time-dependent mean field games
which are known to the author, the Hamiltonian, $\mathcal{H},$ is assumed to be separable.
This means that it is assumed that there exist $H$ and $F$ such that $\mathcal{H}(t,x,p,m)=H(t,x,p)+F(t,x,m).$
In such a case, the function $H$ is still known as the Hamiltonian, but $F$ is then referred to as the \emph{coupling}.
However, in applications, non-separable Hamiltonians are frequently of interest \cite{mollNotes}.  
We do not assume separability in the present work.

In the case of separable Hamiltonian and local coupling, a number of works  prove the existence of weak solutions
\cite{porrettaCRAS}, \cite{porrettaFull}, \cite{porrettaARMA}.  This includes works in which the diffusion terms
either can be degenerate \cite{cardESAIM} or absent altogether \cite{cardNoDEA}.
The only works of which the author is aware which treat non-separable Hamiltonians are 
 \cite{gomesARXIV} and \cite{gomesExtended}; in these, weak solutions are proven to exist for
 stationary problems.

When the Hamiltonian is separable and when the coupling is a nonlocal smoothing operator, strong solutions have
been shown to exist \cite{lasry3}.  In the case of local coupling, strong solutions have been proven to
exist in some cases.  When the Hamiltonian is quadratic, $H=|Du|^{2},$ it is possible to use the Hopf-Cole transformation
to find a system more amenable to analysis \cite{gueantMMMAS}.  Under a number of technical assumptions, Gomes,
Pimentel, and S\'{a}nchez-Morgado have shown that strong solutions exist in the case of subquadratic or 
superquadratic Hamiltonians in the papers \cite{gomesSub} and \cite{gomesSuper}, respectively.  Gomes and Pimentel
have also treated logarithmic nonlinearities \cite{gomesLog}.  

The present work is an extension of the announcement \cite{ambroseMFG}.  In \cite{ambroseMFG}, the author
presented an existence and local uniqueness theorem for strong solutions of the system 
\eqref{uEquation}-\eqref{mEquation} in the
case of a separable Hamiltonian, with local coupling.  This theorem used function spaces based upon the Wiener
algebra, and was based upon work of Duchon and Robert for vortex sheets in incompressible, inviscid fluids 
\cite{duchonRobert},
and also upon the extension of \cite{duchonRobert} to finite time intervals by Milgrom and the author 
\cite{milgromAmbrose}.  
As long as certain mapping properties and Lipschitz estimates were satisfied by the Hamiltonian,
the coupling, and the payoff function, we were able to conclude that for initial measures $m_{0}$ sufficiently close
to the uniform measure on $\mathbb{T}^{n},$ a solution of the problem \eqref{uEquation}, \eqref{mEquation},
\eqref{BCs} exists and is unique in a ball about the origin.  The theorem of \cite{ambroseMFG} is complementary
to the results of \cite{gomesSuper} and \cite{gomesSub}, in that the assumptions are simpler, at the expense that
we only find small solutions.
In the present work, we include more details of the proof of \cite{ambroseMFG}, and we also
extend to the case of non-separable Hamiltonians.  

Going further beyond \cite{ambroseMFG}, we provide another existence theorem, still allowing for
non-separable Hamiltonians.  This theorem still requires smallness in some way, and we impose this by
placing a small parameter in front of the Hamiltonian.  Then, for data of arbitrary size, we use the implicit 
function theorem to conclude that solutions to the mean field games system exist for some interval of values
of the small parameter.

The remainder of this paper is organized as follows: In Section 2, we discuss our viewpoint that solvability of the
mean field games system is related to ellipticity of the system.  In Section 3, we generalize the results of 
\cite{ambroseMFG}, proving our first two main theorems on the existence of small, strong solutions for mean field games
with non-separable Hamiltonian (one of these is for payoff boundary conditions, and the other is for planning boundary
conditions).  In Section 4 we present our third main theorem, removing the smallness condition
on the data but placing a smallness condition on the Hamiltonian instead.

\section{A remark on ellipticity}\label{section:ellipticity}

The equations \eqref{uEquation}, \eqref{mEquation} are evidently a coupled system of equations, one of which is
forward parabolic and one of which is backwards parabolic.  While it has been remarked sometimes that there is
thus some elliptic behavior of such a coupled system, we take the point of view that \eqref{uEquation}, 
\eqref{mEquation} can be viewed as a quaslinear elliptic system.  We illustrate this with the following
calculation.

Let $\phi,$ $\psi$ satisfy the following coupled system:
\begin{equation}\label{modelSystem}
\phi_{t}=-\Delta\phi + a(\phi,\psi),\qquad \psi_{t}=\Delta\psi + b(\phi,\psi).
\end{equation}
We define $\Phi=\phi+\psi$ and $\Psi=\phi-\psi.$
We can then form the following system by adding and subtracting in \eqref{modelSystem}:
\begin{equation}\label{modelSystem2}
\Phi_{t}=-\Delta\Psi+\tilde{a}(\Phi,\Psi),\qquad \Psi_{t}=-\Delta\Phi+\tilde{b}(\Phi,\Psi).
\end{equation}

We take the time derivative of both equations in \eqref{modelSystem2}:
\begin{equation}\nonumber
\Phi_{tt}=\Delta^{2}\Phi-\Delta\tilde{b}(\Phi,\Psi)+(\tilde{a}(\Phi,\Psi))_{t},
\qquad \Psi_{tt}=\Delta^{2}\Psi-\Delta\tilde{a}(\Phi,\Psi)+(\tilde{b}(\Phi,\Psi))_{t}.
\end{equation}
If we had $\tilde{a}=\tilde{b}=0,$ then these are clearly elliptic equations in space-time.  If instead $\tilde{a}$ and 
$\tilde{b}$ depended on $\Phi,$ $\Psi,$ and their first spatial derivatives, we would have a semilinear elliptic system 
(the contributions of terms like $\Delta\tilde{b}$ or $\partial_{t}\tilde{b}$ would include up to third spatial 
derivatives of $\Phi$ and $\Psi,$ which are therefore lower-order terms).  However, in the mean field games case,
$\tilde{a}$ and $\tilde{b}$ include second spatial derivatives, because of the divergence term in \eqref{mEquation}.
Thus, if the mean field games system is elliptic, it is a quasilinear elliptic system at best.

We remark that we ensure ellipticity (and solvability) 
of the system \eqref{uEquation}, \eqref{mEquation} through our smallness assumptions.  In the first set of
results we present, the smallness assumption is on the data; this is the content of Section \ref{generalTheoremSection}.  
In our second kind of result, presented in Section \ref{largeDataSection}, 
we allow for general
data but instead consider the entire Hamiltonian to be small (i.e., we place a small parameter in front of $\mathcal{H}$).
We have no need in our
proofs for an assumption of convexity of the Hamiltonian.  In the absence of our smallness assumptions, however,
a convexity assumption could guarantee ellipticity as well.  Generically, however, without any special assumptions,
there would be a possibility of the 
mean field games system being of mixed  type.

We note that this formulation in terms of $\Phi$ and $\Psi$ was for illustrative purposes only, and it will not be used
in the sequel.
\section{The small data theorem}\label{generalTheoremSection}

In this section we will state and prove our first main theorem; this takes place in Section \ref{generalTheorem} below.  First,
we reformulate the problem in Section \ref{section:Duhamel}, we define our function spaces in Section \ref{section:Wiener},
and we establish estimates for relevant operators in Section \ref{section:operatorEstimates}.  We  
give an existence theorem for the planning problem in Section \ref{section:planning}, and we give
examples of Hamiltonians which satisfy our hypotheses in Section \ref{exampleSection}.

\subsection{Duhamel Formulation}\label{section:Duhamel}

We will be writing a Duhamel formula which requires integrating forward in time
from $t=0$ and which also requires integrating backwards in time from $t=T.$  It will therefore be helpful to introduce
the operators $I^{+}$ and $I^{-},$ defined as follows:
\begin{equation}\nonumber
(I^{+}f)(t,\cdot)=\int_{0}^{t}e^{\Delta(t-s)}f(s,\cdot)\ ds,
\end{equation}
\begin{equation}\nonumber
(I^{-}f)(t,\cdot)=\int_{t}^{T}e^{\Delta(s-t)}f(s,\cdot)\ ds,
\end{equation}
We also introduce the projection operator $\mathbb{P},$ which removes the mean of a periodic function.
That is, $\mathbb{P}f=f-\frac{1}{\mathrm{vol}(\mathbb{T}^{n})}\int_{\mathbb{T}^{n}}f.$
We define the constant $\bar{m}=1/\mathrm{vol}(\mathbb{T}^{n}).$

We change variables from $(u,m)$ to $(w,\mu),$ where $w=\mathbb{P}u$ and $\mu=\mathbb{P}m=m-\bar{m}.$
Notice that $Du=Dw.$
We define $\Xi(t,x,Dw,\mu)=\mathbb{P}\mathcal{H}(t,x,Du,m)$ and $\Theta(t,x,Dw,\mu)=\mathcal{H}_{p}(t,x,Du,m).$
The evolution equation and initial condition for $\mu$ are then as follows:
\begin{equation}\label{muEvolution}
\mu_{t}-\Delta\mu+\mathrm{div}(\mu\Theta(\cdot,\cdot,Dw,\mu))+\bar{m}\mathrm{div}(\Theta(\cdot,\cdot,Dw,\mu))=0,
\end{equation}
\begin{equation}\label{muBC}
\mu(0,x)=\mu_{0}(x):=m_{0}(x)-\bar{m}.
\end{equation}
For $t\in[0,T],$ we integrate forward from time zero, finding the following Duhamel formula for $\mu:$
\begin{equation}\label{duhamelMu}
\mu(t,\cdot)=e^{\Delta t}\mu_{0}
+I^{+}(\mathrm{div}(\mu \Theta(\cdot,\cdot,Dw,\mu)))(t,\cdot)
+\bar{m}(I^{+}(\mathrm{div}(\Theta(\cdot,\cdot,Dw,\mu))))(t,\cdot).
\end{equation}
We introduce the operator $I_{T},$ which is simply $I^{+}$ evaluated at time $t=T:$
$$I_{T}f=(I^{+}f)(T,\cdot).$$
Considering \eqref{duhamelMu} when $t=T,$ we introduce more notation:
\begin{equation}\label{defn:Amuw}
\mu(T,\cdot)=A(\mu,w):=e^{\Delta T}\mu_{0}+I_{T}(\mathrm{div}(\mu\Theta(\cdot,\cdot,Dw,\mu)))
+\bar{m}I_{T}(\mathrm{div}(\Theta(\cdot,\cdot,Dw,\mu))).
\end{equation}

The evolution equation and boundary condition for $w$ are
\begin{equation}\label{wEvolution}
w_{t}+\Delta w+\Xi(t,x,Dw,\mu)=0,
\end{equation}
\begin{equation}\label{wBC}
w(T,x)=\mathbb{P}G(x,m(T,\cdot))=\mathbb{P}G(x,\mu(T,\cdot)+\bar{m})=:\tilde{G}(\mu(T,\cdot)).
\end{equation}
Before writing the Duhamel formula for $w,$ we decompose $\Xi$ somewhat, to remove a linear term.
We write 
\begin{equation}\nonumber
\Xi(t,x,Dw,\mu)=b(t,x)\mu+\Upsilon(t,x,Dw,\mu),
\end{equation}
where the conditions to be satisfied by $b$ and $\Upsilon$ will be given below.  
We can now give our Duhamel formula for $w,$ which integrates backwards from time $T:$
\begin{equation}\label{duhamelWFirst}
w(t,\cdot)=
e^{\Delta(T-t)}\tilde{G}(A(\mu,w))-I^{-}(\mathbb{P}\Upsilon(\cdot,\cdot,Dw,\mu))(t)
-I^{-}(\mathbb{P}(b\mu))(t).
\end{equation}
We are not finished with our Duhamel formula, though; to eliminate the linear term (the final term on the right-hand side
of \eqref{duhamelWFirst}), we substitute from \eqref{duhamelMu}.  We arrive at the following:
\begin{multline}\label{duhamelWLast}
w(t,\cdot)=e^{\Delta(T-t)}\tilde{G}(A(\mu,w))-I^{-}(\mathbb{P}\Upsilon(\cdot,\cdot,Dw,\mu))(t)
-I^{-}(\mathbb{P}(be^{\Delta \cdot}\mu_{0}))(t)\\
-I^{-}(\mathbb{P}(bI^{+}\mathrm{div}(\mu\Theta(\cdot,\cdot,Dw,\mu))(\cdot)))(t)
-\bar{m}I^{-}(\mathbb{P}(bI^{+}\mathrm{div}(\Theta(\cdot,\cdot,Dw,\mu))(\cdot)))(t).
\end{multline}

We define a mapping, $\mathcal{T}(w,\mu)=(\mathcal{T}_{1}(w,\mu),\mathcal{T}_{2}(w,\mu)),$ based on the above.
The operators $\mathcal{T}_{1}$ and $\mathcal{T}_{2}$ are given by the right-hand sides of \eqref{duhamelMu}
and \eqref{duhamelWLast}.  That is, we define
\begin{equation}\label{defn:T1}
\mathcal{T}_{1}(w,\mu)=e^{\Delta t}\mu_{0}
+(I^{+}\mathrm{div}(\mu \Theta(\cdot,\cdot,Dw,\mu)))(t,\cdot)
+\bar{m}(I^{+}(\mathrm{div}(\Theta(\cdot,\cdot,Dw,\mu))))(t,\cdot),
\end{equation}
\begin{multline}\label{defn:T2}
\mathcal{T}_{2}(w,\mu)=e^{\Delta(T-t)}\tilde{G}(A(\mu,w))-I^{-}(\mathbb{P}\Upsilon(\cdot,\cdot,Dw,\mu))(t)
-I^{-}(\mathbb{P}(be^{\Delta \cdot}\mu_{0}))(t)\\
-I^{-}(\mathbb{P}(bI^{+}\mathrm{div}(\mu\Theta(\cdot,\cdot,Dw,\mu))(\cdot)))(t)
-\bar{m}I^{-}(\mathbb{P}(bI^{+}\mathrm{div}(\Theta(\cdot,\cdot,Dw,\mu))(\cdot)))(t).
\end{multline}
In Section \ref{generalTheorem} below, we will prove the existence of  
fixed points of $\mathcal{T},$ and thus solutions of our system.
Before finding fixed points, however, we must discuss function spaces.

\subsection{Function spaces}\label{section:Wiener}
As in \cite{ambroseMFG}, we let $T>0$ and $\alpha\in\left(0,\frac{T}{2}\right)$ be given, and
we define $\beta:[0,T]\rightarrow[0,\alpha]$ by
\begin{equation}\nonumber
\beta(s)=\left\{\begin{array}{ll}
2\alpha s/T,& s\in[0,T/2],\\
2\alpha-2\alpha s/T,& s\in[T/2, T].
\end{array}\right.
\end{equation}
We will study solutions of the mean field games system in function spaces based on the Wiener
algebra (that is, spaces based on the idea of taking Fourier coefficients to be in $\ell^{1}$).
Our spaces will use the exponential weight $e^{\beta(t)|k|},$ and this will allow us to conclude analyticity of solutions.  

We define $B^{j}$ to be the set of continuous functions from $\mathbb{T}^{n}$ to $\mathbb{R}$ such that
for all $f\in B^{j},$ the norm $|f|_{B^{j}}$ is finite, with
\begin{equation}\nonumber
|f|_{B^{j}}=\sum_{k\in\mathbb{Z}^{n}}(1+|k|^{j})|\hat{f}(k)|.
\end{equation}
We extend this to a space-time version $\mathcal{B}_{\alpha}^{j};$ this is the set of all functions in $C([0,T];B^{j})$ such
that $\|f\|_{\mathcal{B}_{\alpha}^{j}}$ is finite, with
\begin{equation}
\|f\|_{\mathcal{B}_{\alpha}^{j}}=\sum_{k\in\mathbb{Z}^{n}}\sup_{t\in[0,T]}
(1+|k|^{j})e^{\beta(t)|k|}|\hat{f}(t,k)|.
\end{equation}

We have an algebra property for $\mathcal{B}_{\alpha}^{j}:$ if $\|f\|_{\mathcal{B}_{\alpha}^{j}}$ and
$\|g\|_{\mathcal{B}_{\alpha}^{j}}$ are both finite, then so is $\|fg\|_{\mathcal{B}_{\alpha}^{j}}.$
We demonstrate this for $j=0;$ to begin, we write the definition:
\begin{equation}\nonumber
\|fg\|_{\mathcal{B}_{\alpha}^{0}}=2\sum_{k\in\mathbb{Z}^{n}}\sup_{t\in[0,T]}e^{\beta(t)|k|}
\left|\widehat{fg}(k)\right|
\end{equation}
Next, we use the convolution formula for $\widehat{fg}$ and the triangle inequality:
\begin{equation}\nonumber
\|fg\|_{\mathcal{B}_{\alpha}^{0}}\leq 2\sum_{k\in\mathbb{Z}^{n}}
\sup_{t\in[0,T]}\sum_{j\in\mathbb{Z}^{n}}
\left(e^{\beta(t)|k-j|}|\hat{f}(t,k-j)|\right)
\left(e^{\beta(t)|j|}|\hat{g}(t,j)|\right).
\end{equation}
We next pass the supremum through the second summation, and we use Tonelli's theorem:
\begin{equation}\label{basicAlgebraProperty}
\|fg\|_{\mathcal{B}_{\alpha}^{0}}\leq2\sum_{j\in\mathbb{Z}^{n}}\sum_{k\in\mathbb{Z}^{n}}
\left(\sup_{t\in[0,T]}e^{\beta(t)|k-j|}|\hat{f}(t,k-j)|\right)
\left(\sup_{t\in[0,T]}e^{\beta(t)|j|}|\hat{g}(t,j)|\right)
\leq \|f\|_{\mathcal{B}_{\alpha}^{0}}\|g\|_{\mathcal{B}_{\alpha}^{0}}.
\end{equation}

Finally, the product rule and \eqref{basicAlgebraProperty} imply the algebra property for $j\in\mathbb{N}:$ for any
$j\in\mathbb{N},$ there exists $c_{j}>0$ such that for all 
$f\in\mathcal{B}_{\alpha}^{j}$ and for all $g\in\mathcal{B}_{\alpha}^{j},$  we have $fg\in\mathcal{B}_{\alpha}^{j}$ with the
estimate
\begin{equation}\nonumber
\|fg\|_{\mathcal{B}_{\alpha}^{j}}\leq c_{j}\|f\|_{\mathcal{B}_{\alpha}^{j}}\|g\|_{\mathcal{B}_{\alpha}^{j}}.
\end{equation}

\subsection{Operator estimates}\label{section:operatorEstimates}
For any $j\in\mathbb{N},$ we show in this section that
$I^{+}$ and $I^{-}$ are bounded linear operators from $\mathcal{B}_{\alpha}^{j}$ to $\mathcal{B}_{\alpha}^{j+2}.$
Technically, we consider the subspace of $\mathcal{B}_{\alpha}^{j}$ consisting of mean-zero functions.

Let $j\in\mathbb{N},$ and let $h\in\mathcal{B}_{\alpha}^{j}$ have zero spatial mean at each time.
We begin with the simple statement of the norm of $I^{+}h:$
\begin{equation}\nonumber
\|I^{+}h\|_{\mathcal{B}_{\alpha}^{j+2}}=\sum_{k\in\mathbb{Z}^{n}\setminus\{0\}}(1+|k|^{j+2})\sup_{t\in[0,T]}
e^{\beta(t)|k|}\left|\int_{0}^{t}e^{-|k|^{2}(t-s)}\hat{h}(s,k)\ ds\right|.
\end{equation}
We replace $1+|k|^{j+2}$ with $2|k|^{j+2},$ for simplicity.
We multiply and divide with the exponentials, and we use some elementary inequalities:
\begin{multline}\nonumber
\|I^{+}h\|_{\mathcal{B}_{\alpha}^{j+2}}\leq2\sum_{k\in\mathbb{Z}^{n}\setminus\{0\}}|k|^{j+2}\sup_{t\in[0,T]}
e^{\beta(t)|k|}\left|\int_{0}^{t}e^{-|k|^{2}(t-s)}e^{-\beta(s)|k|}e^{\beta(s)|k|}\hat{h}(s,k)\ ds\right|
\\
\leq2\sum_{k\in\mathbb{Z}^{n}\setminus\{0\}}|k|^{j+2}\sup_{t\in[0,T]}\left[e^{\beta(t)|k|}\int_{0}^{t}
e^{-|k|^{2}(t-s)}e^{-\beta(s)|k|}\sup_{\tau\in[0,T]}\left[e^{\beta(\tau)|k|}|\hat{h}(\tau,k)|
\right]\ ds\right].
\end{multline}
Next, we pull the final supremum through the integral and through the first supremum, and we rearrange the factors of $|k|:$
\begin{equation}\nonumber
\|I^{+}h\|_{\mathcal{B}_{\alpha}^{j+2}}\leq2\sum_{k\in\mathbb{Z}^{n}\setminus\{0\}}
\left[|k|^{j}\sup_{\tau\in[0,T]}e^{\beta(\tau)|k|}|\hat{h}(\tau,k)|\right]
\left[\sup_{t\in[0,T]}|k|^{2}e^{\beta(t)|k|}\int_{0}^{t}e^{-|k|^{2}(t-s)-\beta(s)|k|}\ ds
\right].
\end{equation}
For the second quantity in square brackets, we now take its supremum over values of $k,$ and pull this through the sum.
This yields the following:
\begin{equation}\nonumber
\|I^{+}h\|_{\mathcal{B}_{\alpha}^{j+2}}\leq2\left[
\sup_{t\in[0,T]}\sup_{k\in\mathbb{Z}^{n}\setminus\{0\}}|k|^{2}e^{\beta(t)|k|}\int_{0}^{t}
e^{-|k|^{2}(t-s)-\beta(s)|k|}\ ds\right]
\|h\|_{\mathcal{B}_{\alpha}^{j}}.
\end{equation}
Thus, to verify that $I^{+}$ is indeed a bounded linear operator between $\mathcal{B}_{\alpha}^{j}$ and
$\mathcal{B}_{\alpha}^{j+2}$ as claimed, we need only to verify that the quantity
\begin{equation}\label{quantity:toBeBounded}
\sup_{t\in[0,T]}\sup_{k\in\mathbb{Z}^{n}\setminus\{0\}}|k|^{2}e^{\beta(t)|k|-t|k|^{2}}\int_{0}^{t}
e^{|k|^{2}s-\beta(s)|k|}\ ds
\end{equation}
is finite.

We will check that the quantity \eqref{quantity:toBeBounded} is finite by first checking values $t\in[0,T/2].$
For $t\in[0,T/2],$ we need only consider $s\in[0,T/2]$ as well, and thus $\beta(s)=2\alpha s/T.$  We may thus
compute the integral as follows:
\begin{equation}\label{integral:computed1}
\int_{0}^{t}e^{|k|^{2}s-\beta(s)|k|}\ ds = \int_{0}^{t}e^{|k|^{2}s-2\alpha s|k|/T}\ ds
=\frac{\exp\{|k|^{2}t-2\alpha t|k|/T\}-1}{|k|^{2}-2\alpha|k|/T}.
\end{equation}
Using \eqref{integral:computed1} with \eqref{quantity:toBeBounded}, we have the following:
\begin{multline}\label{bounded:case1}
\sup_{t\in[0,T/2]}\sup_{k\in\mathbb{Z}^{n}\setminus\{0\}}|k|^{2}\exp\{2\alpha t|k|/T-t|k|^{2}\}\cdot
\frac{\exp\{|k|^{2}t-2\alpha t|k|/T\}-1}{|k|^{2}-2\alpha|k|/T}
\\
= \sup_{t\in[0,T/2]}\sup_{k\in\mathbb{Z}^{n}\setminus\{0\}}\frac{1-\exp\{2\alpha t|k|/T-t|k|^{2}\}}{1-2\alpha/(T|k|)}
\leq \sup_{k\in\mathbb{Z}^{n}\setminus\{0\}}\frac{1}{1-2\alpha/(T|k|)}=\frac{1}{1-2\alpha/T}=\frac{T}{T-2\alpha}.
\end{multline}
(Recall that we have assumed $\alpha\in(0,T/2).$)

We now turn to the case $t\in[T/2,T].$  In this case, we may write the integral from \eqref{quantity:toBeBounded} as
follows:
\begin{equation}\label{integral:case2}
\int_{0}^{t}e^{|k|^{2}s-\beta(s)|k|}\ ds
=\int_{0}^{T/2}e^{|k|^{2}s-\beta(s)|k|}\ ds
+\int_{T/2}^{t}e^{|k|^{2}s-\beta(s)|k|}\ ds.
\end{equation}
The first integral on the right-hand side of \eqref{integral:case2} was previously computed; specifically, we have its value
by setting $t=T/2$ in \eqref{integral:computed1}.  For the second integral on the right-hand side of \eqref{integral:case2},
we consider only $s\in[T/2,T],$ and thus $\beta(s)=2\alpha-2\alpha s/T:$
\begin{multline}\label{integral:computed2}
\int_{T/2}^{t}e^{|k|^{2}s-\beta(s)|k|}\ ds=\int_{T/2}^{t}e^{|k|^{2}s-2\alpha|k|+2\alpha s|k|/T}\ ds\\
=e^{-2\alpha|k|}\left(\frac{\exp\{|k|^{2}t+2\alpha t|k|/T\}-\exp\{|k|^{2}T/2+\alpha|k|\}}{|k|^{2}+2\alpha|k|/T}\right).
\end{multline}
Combining \eqref{integral:computed1} (setting $t=T/2$ there) with \eqref{integral:computed2}, we have found the following
for $t\in[T/2,T]:$
\begin{multline}\nonumber
\int_{0}^{t}e^{|k|^{2}s-\beta(s)|k|}\ ds\\
=\frac{\exp\{|k|^{2}T/2-\alpha|k|\}-1}{|k|^{2}-2\alpha|k|/T}
+e^{-2\alpha|k|}\left(\frac{\exp\{|k|^{2}t+2\alpha t|k|/T\}-\exp\{|k|^{2}T/2+\alpha|k|\}}{|k|^{2}+2\alpha|k|/T}\right).
\end{multline}
We use this with \eqref{quantity:toBeBounded}, keeping in mind that since $t\in[T/2,T],$ we have
$\beta(t)=2\alpha-2\alpha t/T:$
\begin{equation}\nonumber
\sup_{t\in[T/2,T]}\sup_{k\in\mathbb{Z}^{n}\setminus\{0\}}|k|^{2}e^{\beta(t)|k|-t|k|^{2}}\int_{0}^{t}
e^{|k|^{2}s-\beta(s)|k|}\ ds\leq I + II,
\end{equation}
where $I$ and $II$ are given by
\begin{equation}\nonumber
I=\sup_{t\in[T/2,T]}\sup_{k\in\mathbb{Z}^{n}\setminus\{0\}}|k|^{2}\exp\{2\alpha|k|-2\alpha|k|t/T-t|k|^{2}\}
\left(\frac{\exp\{|k|^{2}T/2-\alpha|k|\}-1}{|k|^{2}-2\alpha|k|/T}\right),
\end{equation}
\begin{equation}\nonumber
II=\sup_{t\in[T/2,T]}\sup_{k\in\mathbb{Z}^{n}\setminus\{0\}}|k|^{2}
\exp\{-2\alpha |k|t/T-t|k|^{2}\}
\left(\frac{\exp\{|k|^{2}t+2\alpha t|k|/T\}-\exp\{|k|^{2}T/2+\alpha|k|\}}{|k|^{2}+2\alpha|k|/T}\right).
\end{equation}

We now bound $I$ and $II.$  We start with $I:$
\begin{equation}\nonumber
I\leq\sup_{t\in[0,T/2]}\sup_{k\in\mathbb{Z}^{n}\setminus\{0\}}\frac{1}{1-2\alpha/(T|k|)}
\exp\left\{\alpha|k|-2\alpha|k|t/T-t|k|^{2}+|k|^{2}T/2\right\}.
\end{equation}
Using the condition $t\geq T/2,$ we may simply bound this as
\begin{equation}\label{bounded:I}
I\leq\frac{1}{1-2\alpha/T}=\frac{T}{T-2\alpha}.
\end{equation}
We similarly bound $II$ as follows:
\begin{equation}\label{bounded:II}
II\leq \sup_{k\in\mathbb{Z}^{n}\setminus\{0\}}\frac{1}{1+2\alpha/(T|k|)}=1.
\end{equation}

Based on all of the above (specifically considering \eqref{bounded:case1} 
and adding \eqref{bounded:I} and \eqref{bounded:II}),
we conclude that $I^{+}$ is a bounded linear operator, with operator norm satisfying
\begin{equation}\nonumber
\|I^{+}\|_{\mathcal{B}_{\alpha}^{j}\rightarrow\mathcal{B}_{\alpha}^{j+2}}\leq \frac{2T}{T-2\alpha}+2.
\end{equation}
We omit the details of the proof of the boundedness of $I^{-},$ because they are completely analogous to those for $I^{+}.$
In fact, we end with the exact same estimate of the operator norm:
\begin{equation}\nonumber
\|I^{-}\|_{\mathcal{B}_{\alpha}^{j}\rightarrow\mathcal{B}_{\alpha}^{j+2}}\leq \frac{2T}{T-2\alpha}+2.
\end{equation}

\subsection{The first main theorem}\label{generalTheorem}

We will show that the operator $\mathcal{T}$ is a local contraction, under certain assumptions on the Hamiltonian
$\mathcal{H}$ and the payoff function $G.$  In particular, these must satisfy certain Lipschitz properties.  Together
wtih the operator estimates of Section \ref{section:operatorEstimates}, 
these Lipschitz properties will allow us to prove Theorem \ref{smallDataTheorem1}.  Our assumption on the payoff
function, $G$ is {\bf(A1)} below, and our assumption on the Hamiltonian, $\mathcal{H},$ is {\bf(A2)} below.

\noindent
{\bf (A1)} $\tilde{G}(0)=0,$ and $\tilde{G}$ is Lipschitz in a neighborhood of the origin in $B^{2}.$
Specifically, the Lipschitz property we assume is that there exists $c>0$ and $\epsilon>0$ such that for all 
$a_{1},$ $a_{2}$ satisfying $|a_{1}|_{B^{2}}<\epsilon$ and $|a_{2}|_{B^{2}}<\epsilon,$
\begin{equation}\label{lipschitzG}
|\tilde{G}(a_{1})-\tilde{G}(a_{2})|_{B^{2}}\leq c |a_{1}-a_{2}|_{B^{2}}.
\end{equation}
For example, $G(a)=a$ or $G(a)=a^2$ certainly satisfy this assumption.

\noindent
{\bf (A2)} $\Upsilon(\cdot,\cdot,0,0)=0$ and $\Theta(\cdot,\cdot,0,0)=0.$  There exists a continuous function
$\Phi_{1}:\mathcal{B}_{\alpha}^{2}\times\mathcal{B}_{\alpha}^{2}\rightarrow\mathbb{R}$
such that as $(w_{1},w_{2},\mu_{1},\mu_{2})\rightarrow 0,$ we have $\Phi_{1}(w_{1},w_{2},\mu_{1},\mu_{2})\rightarrow 0,$
and such that
\begin{multline}\label{lipschitzHp}
\|\Theta(\cdot,\cdot,Dw_{1},\mu_{1})
-\Theta(\cdot,\cdot,Dw_{2},\mu_{2})\|_{(\mathcal{B}_{\alpha}^{1})^{n}}
\\
\leq
\Phi_{1}(w_{1},w_{2},\mu_{1},\mu_{2})
\left(\|Dw_{1}-Dw_{2}\|_{(\mathcal{B}_{\alpha}^{1})^{n}}+\|\mu_{1}-\mu_{2}\|_{\mathcal{B}_{\alpha}^{2}}\right).
\end{multline}
There exists a continuous function
$\Phi_{2}:\mathcal{B}_{\alpha}^{2}\times\mathcal{B}_{\alpha}^{2}\rightarrow\mathbb{R}$
such that as $(w_{1},w_{2},\mu_{1},\mu_{2})\rightarrow 0,$ we have $\Phi_{2}(w_{1},w_{2},\mu_{1},\mu_{2})\rightarrow 0,$
and such that
\begin{equation}\label{lipschitzH}
\|\mathbb{P}\Upsilon(\cdot,\cdot,Dw_{1},\mu_{1})
-\mathbb{P}\Upsilon(\cdot,\cdot,Dw_{2},\mu_{2})\|_{\mathcal{B}_{\alpha}^{0}}\leq
\Phi_{2}(w_{1},w_{2},\mu_{1},\mu_{2})
\left(\|Dw_{1}-Dw_{2}\|_{\mathcal{B}_{\alpha}^{1}}+\|\mu_{1}-\mu_{2}\|_{\mathcal{B}_{\alpha}^{2}}\right).
\end{equation}

Having stated our assumptions, we are now ready to state the first of our main theorems.

\begin{theorem}\label{smallDataTheorem1}
Let $T>0$ and $\alpha\in(0,T/2)$ be given.  Let assumptions {\bf(A1)} and {\bf(A2)} be satisfied,
and assume $b\in\mathcal{B}_{\alpha}^{0}.$ 
There exists $\delta>0$ such that if $m_{0}$ is a probability measure such that $\mu_{0}=m_{0}-\bar{m}$
satisfies $|\mu_{0}|_{B^{2}}<\delta,$ then the system \eqref{uEquation}, \eqref{mEquation}, 
\eqref{BCs} has a strong, locally unique 
solution $(u,m)\in(\mathcal{B}_{\alpha}^{2})^{2}.$  Furthermore, for all $t\in(0,T),$ each of $u(t,\cdot)$ and $m(t,\cdot)$ are 
analytic, and $m(t,\cdot)$ is a probability measure.
\end{theorem}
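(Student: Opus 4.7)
The plan is to apply the Banach fixed-point theorem to the operator $\mathcal{T}=(\mathcal{T}_{1},\mathcal{T}_{2})$ defined by \eqref{defn:T1}--\eqref{defn:T2}, on a small closed ball $\bar{B}_{R}$ around the origin of $X=\mathcal{B}_{\alpha}^{2}\times\mathcal{B}_{\alpha}^{2}$. All the tools are in place: the algebra property of $\mathcal{B}_{\alpha}^{j}$, the bounded mappings $I^{\pm}\colon\mathcal{B}_{\alpha}^{j}\to\mathcal{B}_{\alpha}^{j+2}$ with explicit norm bound $\tfrac{2T}{T-2\alpha}+2$, and assumptions (A1), (A2), which enforce that the nonlinearities vanish at the origin and are Lipschitz with constants that tend to zero as the arguments do.

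First I would verify that the free-evolution term $t\mapsto e^{\Delta t}\mu_{0}$ maps $B^{2}$ into $\mathcal{B}_{\alpha}^{2}$ boundedly; the point is that $e^{\beta(t)|k|-t|k|^{2}}\leq 1$ uniformly in $k\in\mathbb{Z}^{n}$ and $t\in[0,T]$ because $\beta(t)\leq\alpha<T/2$, essentially a degenerate case of the computation \eqref{bounded:case1}. With this in hand, I estimate every summand of $\mathcal{T}_{1}(w,\mu)$ and $\mathcal{T}_{2}(w,\mu)$ using the algebra property, the operator norms of $I^{\pm}$, and (A2) applied at $(w_{2},\mu_{2})=(0,0)$; for example,
\begin{equation}\nonumber
\|I^{+}\mathrm{div}(\mu\,\Theta(\cdot,\cdot,Dw,\mu))\|_{\mathcal{B}_{\alpha}^{2}}
\leq C\,\|\mu\|_{\mathcal{B}_{\alpha}^{2}}\,\Phi_{1}(w,0,\mu,0)\bigl(\|Dw\|_{(\mathcal{B}_{\alpha}^{1})^{n}}+\|\mu\|_{\mathcal{B}_{\alpha}^{2}}\bigr).
\end{equation}
The terms of $\mathcal{T}_{2}$ that involve $b\in\mathcal{B}_{\alpha}^{0}$, $\mathbb{P}\Upsilon$, and $\tilde{G}\circ A$ are handled analogously, using (A1) for $\tilde{G}$ and the fact that $A(\mu,w)$ inherits its bound from (A2) together with the $I_{T}$ estimate. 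Choosing $R$ so small that $\Phi_{1},\Phi_{2}$ are below a prescribed threshold on $\bar{B}_{R}$, and then $\delta$ so small that the linear contributions $e^{\Delta t}\mu_{0}$ and $I^{-}(\mathbb{P}(b e^{\Delta\cdot}\mu_{0}))$ are bounded by $R/2$, one obtains $\mathcal{T}(\bar{B}_{R})\subset \bar{B}_{R}$.

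The contraction step is carried out by the same estimates applied to telescoping differences. From (A1), (A2), the algebra property, and the operator bounds, one obtains
\begin{equation}\nonumber
\|\mathcal{T}(w_{1},\mu_{1})-\mathcal{T}(w_{2},\mu_{2})\|_{X}\leq \kappa(R)\bigl(\|w_{1}-w_{2}\|_{\mathcal{B}_{\alpha}^{2}}+\|\mu_{1}-\mu_{2}\|_{\mathcal{B}_{\alpha}^{2}}\bigr),
\end{equation}
with $\kappa(R)\to 0$ as $R\to 0$ because $\Phi_{1},\Phi_{2}$ are continuous and vanish at the origin. Shrinking $R$ so that $\kappa(R)<1/2$, and shrinking $\delta$ accordingly to preserve the self-mapping, Banach's theorem yields a unique fixed point $(w,\mu)\in\bar{B}_{R}$. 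Setting $m=\mu+\bar{m}$ and reconstructing $u=w+\bar{u}(t)$, where $\bar{u}(t)$ is determined by integrating \eqref{uEquation} over $\mathbb{T}^{n}$ with the terminal condition $\bar{u}(T)=\frac{1}{\mathrm{vol}(\mathbb{T}^{n})}\int G(m(T,\cdot))\,dx$, produces a strong solution of \eqref{uEquation}--\eqref{BCs} in $(\mathcal{B}_{\alpha}^{2})^{2}$.

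The remaining statements follow rapidly. Analyticity of $u(t,\cdot)$ and $m(t,\cdot)$ for each $t\in(0,T)$ is immediate from the weight in the norm: $\|w\|_{\mathcal{B}_{\alpha}^{2}}<\infty$ gives $|\hat{w}(t,k)|\leq C e^{-\beta(t)|k|}$, and since $\beta(t)>0$ on $(0,T)$, this yields a positive radius of analyticity, uniform on any compact subinterval. That $m(t,\cdot)$ is a probability measure rests on two facts: $\int_{\mathbb{T}^{n}} m(t,\cdot)\,dx$ is conserved because integrating \eqref{mEquation} over $\mathbb{T}^{n}$ kills the divergence, so it equals $1$ for all $t$; and positivity is automatic once $R<\bar{m}$, since $\|\mu\|_{L^{\infty}([0,T]\times\mathbb{T}^{n})}\leq\|\mu\|_{\mathcal{B}_{\alpha}^{0}}\leq R$. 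The main technical obstacle I anticipate is the bookkeeping for the nested terms $I^{-}(\mathbb{P}(b\,I^{+}\mathrm{div}(\cdots)))$ in $\mathcal{T}_{2}$: one must track carefully that the divergence, the two-derivative gain from $I^{\pm}$, and the multiplication by $b\in\mathcal{B}_{\alpha}^{0}$ compose so the output lands back in $\mathcal{B}_{\alpha}^{2}$, and that the Lipschitz dependence of the innermost $\Theta$ on $(Dw,\mu)$ propagates through this chain with a constant that still shrinks with $R$.
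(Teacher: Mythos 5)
Your proposal is correct and follows essentially the same route as the paper: a contraction mapping argument for $\mathcal{T}=(\mathcal{T}_1,\mathcal{T}_2)$ in $\mathcal{B}_\alpha^2\times\mathcal{B}_\alpha^2$, using the $I^\pm$ smoothing, the algebra property, and the vanishing-Lipschitz hypotheses {\bf(A1)}--{\bf(A2)} to make the nonlinear terms subordinate when the radius and $\delta$ are small. The only cosmetic deviation is that you center the ball at the origin of radius $R$, whereas the paper centers it at the explicit point $(a_0,b_0)=(e^{\Delta t}\mu_0, e^{\Delta(T-t)}\tilde{G}(e^{\Delta T}\mu_0)-I^-(\mathbb{P}(be^{\Delta\cdot}\mu_0)))$; the two are interchangeable once $\delta$ is taken small enough that $\|a_0\|+\|b_0\|\lesssim R$. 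Your proposal is slightly more complete than the paper's proof in one respect: you spell out why $m(t,\cdot)$ remains a probability measure (mean-zero structure of $\mu$ forces $\int m\,dx=1$, and $\|\mu\|_{L^\infty}\leq\|\mu\|_{\mathcal{B}_\alpha^0}\leq R<\bar m$ gives positivity), a point the paper asserts in the theorem but does not argue in the proof.
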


\begin{proof} We will prove that $\mathcal{T}$ is a contraction on a set $X.$
We will let $X$ be the closed ball in 
$\mathcal{B}_{\alpha}^{2}\times\mathcal{B}_{\alpha}^{2}$ centered at a point $(a_{0},b_{0})$ with radius $r_{*}.$
While $r_{*}$ will be determined soon, we define $(a_{0},b_{0})$ now to be 
\begin{equation}\label{definitionOfBallX}
(a_{0},b_{0})=(e^{\Delta t}\mu_{0}, e^{\Delta(T-t)}\tilde{G}(e^{\Delta T}\mu_{0})
-I^{-}(\mathbb{P}(be^{\Delta\cdot}\mu_{0}))(t)).
\end{equation}
We note that $(a_{0},b_{0})$ can be made small by taking $\mu_{0}$ small.

To show that $\mathcal{T}$ is a contraction, our first task is to demonstrate that $\mathcal{T}$ maps $X$ to $X.$
To this end, we let $(w,\mu)\in X.$  We first must conclude that 
$\mathcal{T}(w,\mu)\in\mathcal{B}_{\alpha}^{2}\times\mathcal{B}_{\alpha}^{2}.$
It is immediate that if $f\in B^{2},$ then each of $e^{\Delta t}f$ and $e^{\Delta(T-t)}f$ are 
in $\mathcal{B}_{\alpha}^{2}.$  This fact, together with the assumptions {\bf(A1)} and {\bf(A2)} and
the previously demonstrated mapping properties of $I^{+}$ and $I^{-},$ allows us to conclude that 
$\mathcal{T}(w,\mu)\in\mathcal{B}_{\alpha}^{2}\times\mathcal{B}_{\alpha}^{2},$ as desired.

Next, to continue the demonstration that $\mathcal{T}$ maps $X$ to $X,$ 
we will show that $\|\mathcal{T}_{1}(w,\mu)-a_{0}\|_{\mathcal{B}_{\alpha}^{2}}
\leq r_{*}/2,$ and we will show that $\|\mathcal{T}_{2}(w,\mu)-b_{0}\|_{\mathcal{B}_{\alpha}^{2}}\leq r_{*}/2.$ 
We begin with $\mathcal{T}_{1}.$  
Recalling the definition \eqref{defn:T1}, we see that in order 
to show $\|\mathcal{T}_{1}(w,\mu)-a_{0}\|_{\mathcal{B}_{\alpha}^{2}}
\leq r_{*}/2,$ it is sufficient to prove the following two inequalities:
\begin{equation}\label{xToX:1}
\|I^{+}\mathrm{div}(\mu\Theta(\cdot,\cdot,Dw,\mu))\|_{\mathcal{B}_{\alpha}^{2}}\leq \frac{r_{*}}{4},
\end{equation}
\begin{equation}\label{xToX:2}
\|\bar{m}I^{+}\mathrm{div}(\Theta(\cdot,\cdot,Dw,\mu))\|_{\mathcal{B}_{\alpha}^{2}}\leq \frac{r_{*}}{4}.
\end{equation}
We begin by establishing \eqref{xToX:1}.  From the boundedness properties of $I^{+}$ and the fact that the divergence is a 
first-order operator, and from the algebra properties of the $\mathcal{B}_{\alpha}^{j}$ we can immediately write
\begin{equation}\label{firstXtoXQuantity}
\|I^{+}\mathrm{div}(\mu\Theta(\cdot,\cdot,Dw,\mu))\|_{\mathcal{B}_{\alpha}^{2}}\leq
c\|\mu\Theta(\cdot,\cdot,Dw,\mu)\|_{\mathcal{B}_{\alpha}^{1}}
\leq c\|\mu\|_{\mathcal{B}_{\alpha}^{1}}\|\Theta(\cdot,\cdot,Dw,\mu)\|_{\mathcal{B}_{\alpha}^{1}}.
\end{equation}

From \eqref{lipschitzHp}, setting $w_{2}=\mu_{2}=0$ there, and since $\Phi_{1}$ is continuous and $X$ is bounded,
we see that there exists a constant such that
\begin{equation}\label{firstXtoXPTheta}
\|\Theta(\cdot,\cdot,Dw,\mu)\|_{(\mathcal{B}_{\alpha}^{1})^{n}}
\leq c (\|w\|_{\mathcal{B}_{\alpha}^{2}}+\|\mu\|_{\mathcal{B}_{\alpha}^{2}}).
\end{equation}
Since $w\in X$ and $\mu\in X,$ we have 
\begin{equation}\label{XtoXBall}
\|w\|_{\mathcal{B}_{\alpha}^{2}}+\|\mu\|_{\mathcal{B}_{\alpha}^{2}}
\leq \|a_{0}\|_{\mathcal{B}_{\alpha}^{2}}+\|b_{0}\|_{\mathcal{B}_{\alpha}^{2}}+r_{*}.
\end{equation}
Combining \eqref{firstXtoXQuantity}, \eqref{firstXtoXPTheta}, and \eqref{XtoXBall},
we have
\begin{equation}\label{firstXtoXAlmost}
\|I^{+}\mathrm{div}(\mu\Theta(\cdot,\cdot,Dw,\mu)\|_{\mathcal{B}_{\alpha}^{2}}\leq c
(\|a_{0}\|_{\mathcal{B}_{\alpha}^{2}}+\|b_{0}\|_{\mathcal{B}_{\alpha}^{2}}+r_{*})^{2}.
\end{equation}
Thus, we see that if  $r_{*}$ is chosen small enough so that
\begin{equation}\nonumber
4cr_{*}^{2}\leq \frac{r_{*}}{4},
\end{equation}
and if $\mu_{0}$ is chosen small enough so that
\begin{equation}\nonumber
\|a_{0}\|_{\mathcal{B}_{\alpha}^{2}}+\|b_{0}\|_{\mathcal{B}_{\alpha}^{2}}\leq r_{*},
\end{equation}
then \eqref{firstXtoXAlmost} implies \eqref{xToX:1}.

We next demonstrate \eqref{xToX:2}.  
Similarly to the previous term, we have the following bounds:
\begin{equation}\nonumber
\|\bar{m}I^{+}\mathrm{div}\Theta(\cdot,\cdot,Dw,\mu)\|_{\mathcal{B}_{\alpha}^{2}}
\leq \bar{m}\|\Theta(\cdot,\cdot,Dw,\mu)\|_{(\mathcal{B}_{\alpha}^{1})^{n}}
\leq c\bar{m}\Phi_{1}(w,0,\mu,0)\left(\|w\|_{\mathcal{B}_{\alpha}^{2}}+\|\mu\|_{\mathcal{B}_{\alpha}^{2}}\right).
\end{equation}
We again bound the norm of $(w,\mu)$ in terms of $a_{0},$ $b_{0},$ and $r_{*},$ and we also bound $\Phi_{1}$ with
its maximum value on $X:$
\begin{equation}\label{xToX:2:almost}
\|\bar{m}I^{+}\mathrm{div}\mathbb{P}\Theta(\cdot,\cdot,Dw,\mu)\|_{\mathcal{B}_{\alpha}^{2}}
\leq c\bar{m}\left(\max_{(w,\mu)\in X}\Phi_{1}(w,0,\mu,0)\right)
\left(\|a_{0}\|_{\mathcal{B}_{\alpha}^{2}}+\|b_{0}\|_{\mathcal{B}_{\alpha}^{2}}+r_{*}\right).
\end{equation}
We again require that $a_{0}$ and $b_{0}$ are small enough so that
\begin{equation}\nonumber
\|a_{0}\|_{\mathcal{B}_{\alpha}^{2}}+\|b_{0}\|_{\mathcal{B}_{\alpha}^{2}}\leq r_{*}.
\end{equation}
Since we know $\Phi_{1}$ is continuous and since $\Phi_{1}(0,0,0,0)=0,$ we may take $\mu_{0}$ and 
$r_{*}$ small enough so that 
\begin{equation}\nonumber
\max_{(w,\mu)\in X}\Phi_{1}(w,0,\mu,0)\leq \frac{1}{8c\bar{m}}.
\end{equation}
With these conditions, we see that \eqref{xToX:2:almost} implies \eqref{xToX:2}.

Next, we consider $\mathcal{T}_{2}.$  Keeping in mind the definition \eqref{defn:T2} as well as the definition of
$b_{0},$ we see that it is sufficient to demonstrate the following four bounds:
\begin{equation}\label{t2:XtoX:1}
\|e^{\Delta(T-t)}\tilde{G}(e^{\Delta T}\mu_{0})-e^{\Delta(T-t)}\tilde{G}(A(\mu,w))\|_{\mathcal{B}_{\alpha}^{2}}
\leq \frac{r_{*}}{4},
\end{equation}
\begin{equation}\label{t2:XtoX:2}
\|I^{-}(\mathbb{P}\Upsilon(\cdot,\cdot,Dw,\mu))\|_{\mathcal{B}_{\alpha}^{2}}\leq\frac{r_{*}}{4},
\end{equation}
\begin{equation}\label{t2:XtoX:3}
\|I^{-}(\mathbb{P}(bI^{+}(\mathrm{div}(\mu\Theta(\cdot,\cdot,Dw,\mu)))))\|_{\mathcal{B}_{\alpha}^{2}}\leq\frac{r_{*}}{4},
\end{equation}
\begin{equation}\label{t2:XtoX:4}
\|\bar{m}I^{-}(\mathbb{P}(bI^{+}(\mathrm{div}(\Theta(\cdot,\cdot,Dw,\mu)))))\|_{\mathcal{B}_{\alpha}^{2}}\leq\frac{r_{*}}{4}.
\end{equation}
The details of the proof of estimates \eqref{t2:XtoX:2}, \eqref{t2:XtoX:3}, and \eqref{t2:XtoX:4} are very similar to the 
details already demonstrated for $\mathcal{T}_{1},$ so we will omit these.

We will now demonstrate \eqref{t2:XtoX:1}.
It is immediate that
\begin{equation}\nonumber
\|e^{\Delta(T-t)}\tilde{G}(e^{\Delta T}\mu_{0})-e^{\Delta(T-t)}\tilde{G}(A(\mu,w))\|_{\mathcal{B}_{\alpha}^{2}}
\leq c|\tilde{G}(e^{\Delta T}\mu_{0})-\tilde{G}(A((\mu,w))|_{B^{2}}.
\end{equation}
We next use the Lipschitz estimate \eqref{lipschitzG} as well as the definition of $A,$ \eqref{defn:Amuw},
and the triangle inequality,
finding the following estimate:
\begin{multline}\label{almostXtoX}
\|e^{\Delta(T-t)}\tilde{G}(e^{\Delta T}\mu_{0})-e^{\Delta(T-t)}\tilde{G}(A(\mu,w))\|_{\mathcal{B}_{\alpha}^{2}}
\leq c|I_{T}(\mathrm{div}(\mu\Theta(\cdot,\cdot,Dw,\mu)))|_{B^{2}}\\
+c\bar{m}|I_{T}(\mathrm{div}(\Theta(\cdot,\cdot,Dw,\mu)))|_{B^{2}}.
\end{multline}
Next, we may bound the right-hand side of \eqref{almostXtoX} using $I^{+}$ rather than $I_{T}:$
\begin{multline}\nonumber
\|e^{\Delta(T-t)}\tilde{G}(e^{\Delta T}\mu_{0})-e^{\Delta(T-t)}\tilde{G}(A(\mu,w))\|_{\mathcal{B}_{\alpha}^{2}}
\leq c\|I^{+}(\mathrm{div}(\mu\Theta(\cdot,\cdot,Dw,\mu)))\|_{\mathcal{B}_{\alpha}^{2}}\\
+c\bar{m}\|I^{+}(\mathrm{div}(\Theta(\cdot,\cdot,Dw,\mu)))\|_{\mathcal{B}_{\alpha}^{2}}.
\end{multline}
From this point, the details are similar to those provided above for $\mathcal{T}_{1}.$  We have shown that
it is possible to choose $r_{*}$ and $\mu_{0}$ appropriately so that $\mathcal{T}$ does indeed map $X$ to $X.$

We are now ready to demonstrate the contraction estimate.  We will show that if $\mu_{0}$ is taken sufficiently small,
then there exists $\lambda\in(0,1)$ such that for all $(w_{1},\mu_{1})\in X$ and $(w_{2},\mu_{2})\in X,$ we have
\begin{equation}\label{contraction:toShow}
\|\mathcal{T}(w_{1},\mu_{1})-\mathcal{T}(w_{2},\mu_{2})\|_{\mathcal{B}_{\alpha}^{2}\times\mathcal{B}_{\alpha}^{2}}
\leq \lambda\left(\|w_{1}-w_{2}\|_{\mathcal{B}_{\alpha}^{2}}+\|\mu_{1}-\mu_{2}\|_{\mathcal{B}_{\alpha}^{2}}\right).
\end{equation}
Considering the definitions \eqref{defn:T1} and \eqref{defn:T2}, we see that repeated use of the triangle inequality 
implies that it is sufficient to establish the following bounds:
\begin{equation}\label{contraction:1}
\left\|I^{+}\left(\mathrm{div}\Big(\mu_{1}\Theta(\cdot,\cdot,Dw_{1},\mu_{1})-\mu_{2}\Theta(\cdot,\cdot,Dw_{2},\mu_{2})
\Big)\right)\right\|_{\mathcal{B}_{\alpha}^{2}}\leq \frac{1}{7}\left(\|w_{1}-w_{2}\|_{\mathcal{B}_{\alpha}^{2}}
+\|\mu_{1}-\mu_{2}\|_{\mathcal{B}_{\alpha}^{2}}\right),
\end{equation}
\begin{equation}\label{contraction:2}
\bar{m}\left\|I^{+}\left(\mathrm{div}\Big(\Theta(\cdot,\cdot,Dw_{1},\mu_{1})-\Theta(\cdot,\cdot,Dw_{2},\mu_{2})
\Big)\right)\right\|_{\mathcal{B}_{\alpha}^{2}}\leq \frac{1}{7}\left(\|w_{1}-w_{2}\|_{\mathcal{B}_{\alpha}^{2}}
+\|\mu_{1}-\mu_{2}\|_{\mathcal{B}_{\alpha}^{2}}\right),
\end{equation}
\begin{equation}\label{contraction:3}
\left\|e^{\Delta(T-t)}\Big(\tilde{G}(A(\mu_{1},w_{1}))-\tilde{G}(A(\mu_{2},w_{2}))\Big)\right\|_{\mathcal{B}_{\alpha}^{2}}
\leq\frac{1}{7}\left(\|w_{1}-w_{2}\|_{\mathcal{B}_{\alpha}^{2}}
+\|\mu_{1}-\mu_{2}\|_{\mathcal{B}_{\alpha}^{2}}\right),
\end{equation}
\begin{equation}\label{contraction:4}
\left\|I^{-}\left(\mathbb{P}\Big(\Upsilon(\cdot,\cdot,Dw_{1},\mu_{1})-\Upsilon(\cdot,\cdot(Dw_{2},\mu_{2}))\Big)\right)
\right\|_{\mathcal{B}_{\alpha}^{2}}\leq\frac{1}{7}\left(\|w_{1}-w_{2}\|_{\mathcal{B}_{\alpha}^{2}}
+\|\mu_{1}-\mu_{2}\|_{\mathcal{B}_{\alpha}^{2}}\right),
\end{equation}
\begin{multline}\label{contraction:5}
\left\|I^{-}\left(\mathbb{P}\left(bI^{+}\left(\mathrm{div}\Big(\mu_{1}\Theta(\cdot,\cdot,Dw_{1},\mu_{1})
-\mu_{2}\Theta(\cdot,\cdot,Dw_{2},\mu_{2})\Big)\right)\right)\right)\right\|_{\mathcal{B}_{\alpha}^{2}}
\\
\leq\frac{1}{7}\left(\|w_{1}-w_{2}\|_{\mathcal{B}_{\alpha}^{2}}
+\|\mu_{1}-\mu_{2}\|_{\mathcal{B}_{\alpha}^{2}}\right),
\end{multline}
\begin{multline}\label{contraction:6}
\bar{m}\left\|I^{-}\left(\mathbb{P}\left(bI^{+}\left(\mathrm{div}\Big(\Theta(\cdot,\cdot,Dw_{1},\mu_{1})
-\Theta(\cdot,\cdot,Dw_{2},\mu_{2})\Big)\right)\right)\right)\right\|_{\mathcal{B}_{\alpha}^{2}}
\\
\leq\frac{1}{7}\left(\|w_{1}-w_{2}\|_{\mathcal{B}_{\alpha}^{2}}
+\|\mu_{1}-\mu_{2}\|_{\mathcal{B}_{\alpha}^{2}}\right).
\end{multline}
If we are able to establish the estimates \eqref{contraction:1}--\eqref{contraction:6}, then we will have succeeded
in demonstrating \eqref{contraction:toShow} with the constant $\lambda=6/7.$
We omit the remaining details, but we mention that the estimates \eqref{contraction:1}--\eqref{contraction:6} follow
from the assumptions {\bf(A1)} and {\bf(A2)}, using the other tools we have used previously, such as the triangle inequality,
the algebra property of $\mathcal{B}_{\alpha}^{2},$ the mapping properties of $I^{+}$ and $I^{-},$ and so on.

Having established that $\mathcal{T}$ is a contraction on $X,$ we are guaranteed the existence
of $(w,\mu)\in\mathcal{B}_{\alpha}^{2}\times\mathcal{B}_{\alpha}^{2}$ which satisfies
the Duhamel formulation of the payoff problem, \eqref{duhamelMu} and \eqref{duhamelWLast}.  
We may work backward and find that, in fact, \eqref{duhamelWFirst} is also satisfied.
As we have discussed in Section \ref{section:Wiener}, the finiteness of the $\mathcal{B}_{\alpha}^{2}$ norm implies that 
$w$ and $\mu$ are analytic for each $t\in(0,T).$  This is certainly enough regularity to be able to differentiate
\eqref{duhamelMu} and 
\eqref{duhamelWFirst} with respect to time; the result of this operation is the conclusion that $w$ and $\mu$
are strong solutions of \eqref{muEvolution}, \eqref{muBC}, \eqref{wEvolution}, \eqref{wBC}.  
We define $m=\mu+\bar{m},$ and see that
$m$ satisfies \eqref{mEquation}.  Finally, we must solve for the mean of $u.$  We note that from \eqref{uEquation},  
using $Du=Dw,$ the
mean of $u$ satisfies $(1-\mathbb{P})u_{t}=-(1-\mathbb{P})\mathcal{H}(\cdot,\cdot,Dw,m).$  Since the right-hand side
of this equation
has been determined, and since we may compute the terminal mean of $u$ from \eqref{BCs}, we see that we can
integrate in time to find the mean of $u.$  Adding the mean of $u$ to $w,$ we find $u$ which satisfies \eqref{uEquation}.
This completes the proof of the theorem.
\end{proof}

\subsection{The planning problem}\label{section:planning}

The above formulation and existence theorem can be readily adapted to the so-called planning problem, in which
\eqref{BCs} is replaced by
\begin{equation}\label{BCs2}
m(0,x)=m_{0}(x),\qquad u(T,x)=u_{T}(x).
\end{equation}
For our method, the planning problem is slightly more straightforward to treat.  The only modification in the formulation
is that \eqref{duhamelWLast} is replaced with 
\begin{multline}\nonumber
w(t,\cdot)=e^{\Delta(T-t)}w_{T}-I^{-}(\mathbb{P}\Upsilon(\cdot,\cdot,Dw,\mu))(t)
-I^{-}(\mathbb{P}(be^{\Delta \cdot}\mu_{0}))(t)\\
-I^{-}(\mathbb{P}(bI^{+}\mathrm{div}(\mu\Theta(\cdot,\cdot,Dw,\mu))(\cdot)))(t)
-\bar{m}I^{-}(\mathbb{P}(bI^{+}\mathrm{div}(\Theta(\cdot,\cdot,Dw,\mu))(\cdot)))(t).
\end{multline}
Of course, $w_{T}=\mathbb{P}u_{T}.$

We arrive at the second of our main theorems, which is analagous to Theorem \ref{smallDataTheorem1}, but is now
for the planning problem.  Note that since assumption {\bf(A1)} concerned the payoff function, $G,$ it is now irrelevant;
we continue to use assumption {\bf(A2)}.
\begin{theorem}\label{smallDataTheorem2}
Let $T>0$ and $\alpha\in(0,T/2)$ be given.  Let assumption {\bf(A2)} be satisfied, and assume 
$b\in\mathcal{B}_{\alpha}^{0}.$
There exists $\delta>0$ such that if $u_{T}$ and the probability measure $m_{0}$  are such that 
$w_{T}=\mathbb{P}u_{T}$ and $\mu_{0}=m_{0}-\bar{m}$ 
satisfy $|w_{T}|_{B^{2}}+|\mu_{0}|_{B^{2}}<\delta,$ then the system \eqref{uEquation}, \eqref{mEquation}, 
\eqref{BCs2} has a strong, locally unique 
solution $(u,m)\in(\mathcal{B}_{\alpha}^{2})^{2}.$  Furthermore, for all $t\in(0,T),$ each of $u(t,\cdot)$ and $m(t,\cdot)$ are 
analytic, and $m(t,\cdot)$ is a probability measure.
\end{theorem}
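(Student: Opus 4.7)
The plan is to mimic the proof of Theorem \ref{smallDataTheorem1} with the adaptations required by replacing $\tilde{G}(A(\mu,w))$ by the prescribed terminal datum $w_{T}=\mathbb{P}u_{T}$ in the Duhamel formula for $w$. Concretely, I would redefine the fixed-point operator $\mathcal{T}=(\mathcal{T}_{1},\mathcal{T}_{2})$ by keeping $\mathcal{T}_{1}$ exactly as in \eqref{defn:T1} and replacing the first term on the right-hand side of $\mathcal{T}_{2}$ with the fixed function $e^{\Delta(T-t)}w_{T}$, as dictated by the modified Duhamel formula displayed just before the theorem. Solutions of the planning problem are then in one-to-one correspondence with fixed points of this new $\mathcal{T}$.

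Next, I would choose the center of the closed ball $X\subset\mathcal{B}_{\alpha}^{2}\times\mathcal{B}_{\alpha}^{2}$ to be
\begin{equation}\nonumber
(a_{0},b_{0})=\bigl(e^{\Delta t}\mu_{0},\ e^{\Delta(T-t)}w_{T}-I^{-}(\mathbb{P}(be^{\Delta\cdot}\mu_{0}))(t)\bigr),
\end{equation}
and its radius $r_{*}>0$ a parameter to be fixed in the course of the argument. Since $w_{T}\in B^{2}$ and $\mu_{0}\in B^{2}$, the heat semigroup maps these into $\mathcal{B}_{\alpha}^{2}$, so $(a_{0},b_{0})\in\mathcal{B}_{\alpha}^{2}\times\mathcal{B}_{\alpha}^{2}$, and the norm $\|a_{0}\|_{\mathcal{B}_{\alpha}^{2}}+\|b_{0}\|_{\mathcal{B}_{\alpha}^{2}}$ can be made arbitrarily small by shrinking $\delta$ in the hypothesis $|w_{T}|_{B^{2}}+|\mu_{0}|_{B^{2}}<\delta$; this is the one place where the smallness of $w_{T}$ (in addition to $\mu_{0}$) is used.

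The self-mapping step then amounts to verifying \eqref{xToX:1}, \eqref{xToX:2}, \eqref{t2:XtoX:2}, \eqref{t2:XtoX:3}, and \eqref{t2:XtoX:4}; these are exactly the bounds proved in Theorem \ref{smallDataTheorem1}, and their derivations transfer verbatim because they rely only on assumption {\bf(A2)}, the algebra property of $\mathcal{B}_{\alpha}^{2}$, and the mapping properties of $I^{\pm}$. The estimate \eqref{t2:XtoX:1}, which was the only bound in the previous proof that used assumption {\bf(A1)}, no longer appears, since $e^{\Delta(T-t)}w_{T}$ is independent of $(w,\mu)$ and therefore contributes nothing to $\mathcal{T}_{2}(w,\mu)-b_{0}$. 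For the contraction step, the same reasoning eliminates \eqref{contraction:3}, so it suffices to prove \eqref{contraction:1}, \eqref{contraction:2}, \eqref{contraction:4}, \eqref{contraction:5}, \eqref{contraction:6} with $1/7$ replaced by $1/6$, yielding a contraction constant $\lambda=5/6<1$.

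Having obtained a fixed point $(w,\mu)\in\mathcal{B}_{\alpha}^{2}\times\mathcal{B}_{\alpha}^{2}$ of $\mathcal{T}$, the final paragraph of the proof of Theorem \ref{smallDataTheorem1} goes through with no change: the $\mathcal{B}_{\alpha}^{2}$ bound gives analyticity for $t\in(0,T)$, differentiation of the Duhamel identities recovers the strong form \eqref{muEvolution}, \eqref{muBC}, \eqref{wEvolution}, and \eqref{wBC} (now with $w(T,\cdot)=w_{T}$), and the mean of $u$ is reconstructed by integrating $(1-\mathbb{P})u_{t}=-(1-\mathbb{P})\mathcal{H}(\cdot,\cdot,Dw,m)$ backwards in time from the prescribed terminal value $(1-\mathbb{P})u_{T}$. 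Setting $m=\mu+\bar{m}$ produces a probability measure as in the previous proof. Since there is no obstacle beyond those already overcome in Theorem \ref{smallDataTheorem1} (the replacement of $\tilde{G}(A(\mu,w))$ by a fixed function strictly simplifies both the self-mapping and the contraction estimates), the only genuine item to track is that the smallness hypothesis must now be imposed on $w_{T}$ as well as on $\mu_{0}$ in order to keep $(a_{0},b_{0})$ inside a neighborhood where the assumed Lipschitz bounds apply.
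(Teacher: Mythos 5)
Your proposal is correct and takes essentially the same approach the paper intends: the paper explicitly omits the proof, noting only that one replaces $\tilde{G}(A(\mu,w))$ with the fixed datum $w_{T}$ and recenters the ball $X$ accordingly, which is exactly what you do, and your verification that the remaining self-mapping and contraction bounds transfer verbatim (with $1/6$ in place of $1/7$, since the $\tilde{G}$-term drops out) is the natural way to fill in the omitted details. As a small note, your expression for $b_{0}$, namely $e^{\Delta(T-t)}w_{T}-I^{-}(\mathbb{P}(be^{\Delta\cdot}\mu_{0}))(t)$, is the one consistent with the modified Duhamel formula stated just above the theorem; the formula with ``$+\tau I^{-}(e^{\Delta\cdot}\mu_{0})(t)$'' printed in the paper's remark appears to contain a typographical slip.
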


We omit the proof, since it is almost identical to the proof of Theorem \ref{smallDataTheorem1}.  We note
that the primary modification is that the definition of the center of the ball $X,$ given in \eqref{definitionOfBallX},
is changed to
$(a_{0},b_{0})=(e^{\Delta t}\mu_{0}, e^{\Delta(T-t)}w_{T}+\tau I^{-}(e^{\Delta\cdot}\mu_{0})(t)).$

\subsection{Examples}\label{exampleSection}

In \cite{ambroseMFG}, we gave examples with a separable Hamiltonian which satisfy our assumptions.
Included there, in the terminology of the present work, was the example 
$\mathcal{H}(t,x,p,m)=a(t,x)|p|^{4}+m^{3},$ with $a\in\mathcal{B}_{\alpha}^{2}.$  We also included the
example $\mathcal{H}(t,x,p,m)=a(t,x)p_{i}p_{j}p_{k}+m^{3},$ with $i,$ $j,$ $k$ each in $\{1,2,\ldots,n\},$ and
again with $a\in\mathcal{B}_{\alpha}^{2}.$

We have now demonstrated, however, that we need not consider only separable examples.  So, for instance,
we can generalize the above and verify that our assumptions are satisfied for 
$\mathcal{H}(t,x,p,m)=a_{1}(t,x)p_{i}p_{j}p_{k}m^{\ell}+a_{2}(t,x)m^{\sigma},$ and also for
$\mathcal{H}(t,x,p,m)=a_{1}(t,x)|p|^{4}m^{\ell}+a_{2}(t,x)m^{\sigma},$
with $\ell$ and $\sigma$ being
natural numbers, and with $a_{1}$ and $a_{2}$ each being elements of $\mathcal{B}_{\alpha}^{2}.$

To expand upon one example somewhat, we let $\mathcal{H}(t,x,Du,m)=m^{2}|Du|^{4}+m^{3}.$  Then,
\begin{equation}\nonumber
\Xi=\mathbb{P}\left((\mu+\bar{m})^{2}|Dw|^{4}\right)+\mathbb{P}\left((\mu+\bar{m})^{3}\right).
\end{equation}
We can write $\mathbb{P}\left((\mu+\bar{m})^{3}\right)=\mathbb{P}\left(\mu^{3}+3\bar{m}\mu^{2}+3\bar{m}^{2}\mu\right).$
Thus, we take the following:
\begin{equation}\nonumber
b=3\bar{m}^{2},\qquad \Upsilon=\mathbb{P}\left((\mu+\bar{m})^{2}|Dw|^{4}+\mu^{3}+3\bar{m}\mu^{2}\right).
\end{equation}
We could also compute $\Theta,$ and we may conclude that $b,$ $\Upsilon,$ and $\Theta$  satisfy the given conditions.

\section{Large data, small Hamiltonians}\label{largeDataSection}

We now consider the case of weak coupling between the evolution equations, replacing $\mathcal{H}$
with $\varepsilon\mathcal{H},$ with $\varepsilon$ representing a small parameter.  
Thus, for the present section, the equations under consideration are:
\begin{equation}\label{epsilon1}
u_{t}+\Delta u + \varepsilon\mathcal{H}(t,x,Du,m)=0,
\end{equation}
\begin{equation}\label{epsilon2}
m_{t}-\Delta m  + \varepsilon\mathrm{div}(m\mathcal{H}_{p}(t,x,Du,m))=0.
\end{equation}
We may repeat the
previous steps in writing our Duhamel formulation, although for the present purpose, we may end the process
sooner.
We restate \eqref{duhamelMu}:
\begin{equation}\label{duhamelMu-restated}
\mu(t,\cdot)=e^{\Delta t}\mu_{0}
+\varepsilon I^{+}(\mathrm{div}((\mu+\bar{m}) \Theta(\cdot,\cdot,Dw,\mu)))(t,\cdot).
\end{equation}
We also restate \eqref{duhamelWFirst}, but with two modifications: we change the data to reflect the planning problem boundary
conditions, and we use $\Xi$ instead of $\Upsilon$ and $b\mu.$  These considerations yield the following version of
our Duhamel equation for $w:$
\begin{equation}\label{duhamelW-restated}
w(t,\cdot)=
e^{\Delta(T-t)}w_{T}-\varepsilon I^{-}(\Xi(\cdot,\cdot,Dw,\mu))(t).
\end{equation}

Based on these equations, we define the mapping $F$ as follows:
\begin{equation}\nonumber
F\left(\left(\begin{array}{c}w\\ \mu \end{array}\right),\varepsilon\right)=\left(\begin{array}{c}
w-e^{\Delta(T-\cdot)}w_{T}+\varepsilon I^{-}(\Xi(\cdot,\cdot,Dw,\mu))\\
\mu-e^{\Delta\cdot}\mu_{0}-\varepsilon I^{+}(\mathrm{div}((\mu+\bar{m})\Theta(\cdot,\cdot,Dw,\mu)))
\end{array}\right).
\end{equation}
When $\varepsilon=0,$ we know a solution of $F((w,\mu),0)=0;$ this is simply $w(t,\cdot)=e^{\Delta(T-t)}w_{T},$ and
$\mu(t,\cdot,)=e^{\Delta t}\mu_{0}.$  Computation of the derivative of $F$ is straightforward:
\begin{equation}\nonumber
D_{(w,\mu)}F\Big|_{\varepsilon=0}=\mathrm{Id}.
\end{equation}
The identity map is, naturally, a bijection, so the implicit function theorem applies.

There are of course many statements of the implicit function theorem; the following version may be found
in \cite{hunter}.
\begin{theorem}[Implicit Function Theorem] 
Let $X,$ $Y,$ and $Z$ be Banach spaces.  Let $U$ be an open subset of $X\times Y,$ and suppose 
$F:U\rightarrow Z$ is a continuously differentiable map.  Let $(x_{0},y_{0})\in U$ be such that $F(x_{0},y_{0})=0.$
If $D_{y}F(x_{0},y_{0}):Y\rightarrow Z$ is a one-to-one, onto, bounded linear map, then there exists $V\subseteq X,$ an 
open neighborhood of $x_{0},$ and there exists $W\subseteq Y,$ an open neighborhood of $y_{0},$ and a unique
continuously differentiable function $f:V\rightarrow W$ such that $F(x,f(x))=0,$ for all $x\in V.$
\end{theorem}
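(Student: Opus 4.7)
The plan is to reduce the problem to a family of fixed point equations parametrized by $x$ and apply the Banach fixed point theorem, recovering regularity of the implicit function $f$ afterwards. By translating, I would assume without loss of generality that $(x_0, y_0) = (0, 0)$ and $F(0,0)=0$. Writing $L := D_{y}F(0,0)$, the hypothesis is that $L: Y \to Z$ is a bounded linear bijection, so the Banach bounded inverse theorem provides a bounded inverse $L^{-1}: Z \to Y$. The equation $F(x, y) = 0$ is then equivalent to the fixed point equation $y = \Phi_{x}(y)$, where
\begin{equation}\nonumber
\Phi_{x}(y) := y - L^{-1} F(x, y).
\end{equation}

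The key calculation is that $D_{y}\Phi_{x}(y)|_{(0,0)} = \mathrm{Id} - L^{-1} L = 0$. Since $F$ is continuously differentiable, there exist radii $\rho, \sigma > 0$ such that $\|D_{y}\Phi_{x}(y)\| \leq 1/2$ whenever $\|x\| \leq \rho$ and $\|y\| \leq \sigma$. By the mean value inequality, $\Phi_{x}$ is a $1/2$-contraction on the closed ball $W := \overline{B}_{\sigma} \subset Y$ for each such $x$. Shrinking $\rho$ if needed, continuity of $F$ at $(0,0)$ together with $F(0,0)=0$ ensures $\|\Phi_{x}(0)\| = \|L^{-1} F(x,0)\| \leq \sigma/2$, so that $\Phi_{x}: W \to W$. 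The Banach fixed point theorem then yields, for each $x \in V := B_{\rho}$, a unique $f(x) \in W$ with $F(x, f(x)) = 0$.

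Continuity of $f$ follows by writing $f(x_{1}) - f(x_{2}) = \Phi_{x_{1}}(f(x_{1})) - \Phi_{x_{2}}(f(x_{2}))$, inserting $\pm \Phi_{x_{2}}(f(x_{1}))$, and using the contraction estimate to absorb one term into the left-hand side; this leaves a bound by $2\|L^{-1}\|\cdot\|F(x_{1}, f(x_{1})) - F(x_{2}, f(x_{1}))\|$, which tends to zero as $x_{1} \to x_{2}$ by continuity of $F$. For continuous differentiability, the natural candidate is $Df(x) = -[D_{y}F(x, f(x))]^{-1} D_{x}F(x, f(x))$. Since $D_{y}F(x, f(x))$ differs from $L$ by an operator of small norm when $(x, f(x))$ is near $(0,0)$, a Neumann series argument shows $D_{y}F(x, f(x))$ is invertible with uniformly bounded inverse on a possibly smaller neighborhood; the formula is then justified by a Taylor expansion $F(x+h, f(x)+k) = D_{x}F \cdot h + D_{y}F \cdot k + o(\|h\|+\|k\|)$ and solving for $k$ in terms of $h$.

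The main obstacle is the passage from mere Lipschitz continuity of $f$ (which the contraction argument provides essentially for free) to the identification and continuity of the Fréchet derivative $Df$. This requires the perturbative invertibility of $D_{y}F(x, f(x))$ on a full neighborhood, plus a careful use of the little-$o$ remainder in the Taylor expansion of $F$ that depends on $f$ being continuous, so that $(x, f(x))$ is close to $(0,0)$ when $x$ is close to $0$. Once the formula for $Df$ is established, continuity of $Df$ is inherited from that of $DF$ and the continuity of operator inversion on the open set of invertible operators.
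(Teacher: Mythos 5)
The paper does not prove this statement: the implicit function theorem is quoted as a known result with a citation to \cite{hunter}, and is then applied directly, so there is no in-paper proof for your proposal to be compared against.

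That said, your proposal is a correct rendering of the standard Banach fixed-point proof of the implicit function theorem: reduce to $(x_0,y_0)=(0,0)$, set $L=D_yF(0,0)$, invoke the bounded inverse theorem, convert $F(x,y)=0$ to the fixed-point equation $y=\Phi_x(y)$ with $\Phi_x(y)=y-L^{-1}F(x,y)$, observe $D_y\Phi_x(0,0)=0$ so that $\Phi_x$ is a $1/2$-contraction on a small closed ball (mean value inequality), shrink the $x$-neighborhood so $\Phi_x$ maps the ball to itself, and apply the contraction mapping theorem to get $f$. The continuity and $C^1$ steps you outline are also the standard ones. One place to tighten in the differentiability argument: when solving $0=D_xF\cdot h+D_yF\cdot k+o(\|h\|+\|k\|)$ for $k=f(x+h)-f(x)$, the remainder is $o(\|h\|)$ only after one knows $\|k\|=O(\|h\|)$, i.e., the Lipschitz estimate for $f$, not merely its continuity; you mention both, but the Lipschitz bound is what does the work and should be invoked explicitly at that step.
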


With an eye towards using the implicit function theorem, we now state a new assumption on the Hamiltonian, 
$\mathcal{H}.$

\noindent
{\bf (A3)} $\mathcal{H}$ is such that 
$\Xi:(\mathcal{B}_{\alpha}^{1})^{n}\times\mathcal{B}_{\alpha}^{2}\rightarrow\mathcal{B}_{\alpha}^{0}$ is continuously
differentiable, and such that 
$\Theta:(\mathcal{B}_{\alpha}^{1})^{n}\times\mathcal{B}_{\alpha}^{2}\rightarrow(\mathcal{B}_{\alpha}^{1})^{n}$ is
continuously differentiable.

With this assumption in hand, and with the previously developed mapping properties of $I^{+}$ and $I^{-},$
we see that for $w_{T}\in B^{2}$ and $\mu_{0}\in B^{2},$
$F$ maps $\mathcal{B}_{\alpha}^{2}\times\mathcal{B}_{\alpha}^{2}\times\mathbb{R}$ 
into $\mathcal{B}_{\alpha}^{2}\times\mathcal{B}_{\alpha}^{2}.$  We use the implicit function theorem with $X=\mathbb{R}$
and $Y=Z=\mathcal{B}_{\alpha}^{2}\times\mathcal{B}_{\alpha}^{2}.$
We have proved the following theorem.

\begin{theorem}\label{useOfIFTForMFG} Let $T>0$ and $\alpha\in(0,T/2)$ be given.  Let $m_{0}\in B^{2}$ be a probability measure and
let $u_{T}\in B^{2}.$  Assume that $\mathcal{H}$ satisfies {\bf(A3)}.  Then there exists $\varepsilon_{0}>0$ such that
for all $\varepsilon\in(-\varepsilon_{0},\varepsilon_{0}),$ there exist unique
$u(\cdot,\cdot;\varepsilon)\in\mathcal{B}_{\alpha}^{2}$
and $m(\cdot,\cdot;\varepsilon)\in\mathcal{B}_{\alpha}^{2}$ which solve \eqref{epsilon1}, \eqref{epsilon2}, \eqref{BCs2}.
At any time $t\in(0,T/2),$ each of $u$ and $m$ are analytic, and $m$ is a probability measure.
\end{theorem}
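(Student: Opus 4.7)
The plan is to verify the hypotheses of the implicit function theorem cited in the excerpt and then translate the fixed point produced by that theorem back into a strong solution of the original system. First I would confirm that $F$ is a well-defined, continuously differentiable map from an open subset of $(\mathcal{B}_{\alpha}^{2}\times\mathcal{B}_{\alpha}^{2})\times\mathbb{R}$ into $\mathcal{B}_{\alpha}^{2}\times\mathcal{B}_{\alpha}^{2}$. For any $w_{T},\mu_{0}\in B^{2}$, the heat-evolved data $e^{\Delta(T-\cdot)}w_{T}$ and $e^{\Delta\cdot}\mu_{0}$ sit in $\mathcal{B}_{\alpha}^{2}$. By assumption {\bf(A3)}, $\Xi$ and $\Theta$ are $C^{1}$ between the stated spaces; combined with the algebra property of $\mathcal{B}_{\alpha}^{2}$, with the fact that $\mathrm{div}:\mathcal{B}_{\alpha}^{j+1}\to\mathcal{B}_{\alpha}^{j}$, and with the mapping property $I^{\pm}:\mathcal{B}_{\alpha}^{0}\to\mathcal{B}_{\alpha}^{2}$ from Section \ref{section:operatorEstimates}, every term in $F$ lives in $\mathcal{B}_{\alpha}^{2}$ and depends on $(w,\mu)$ in a $C^{1}$ fashion.

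Next I would exhibit the reference point. At $\varepsilon=0$, the choice $w(t,\cdot)=e^{\Delta(T-t)}w_{T}$ and $\mu(t,\cdot)=e^{\Delta t}\mu_{0}$ makes both components of $F$ vanish, and the partial derivative in $(w,\mu)$ at this point is $D_{(w,\mu)}F|_{\varepsilon=0}=\mathrm{Id}$, which is trivially a bounded linear bijection of $\mathcal{B}_{\alpha}^{2}\times\mathcal{B}_{\alpha}^{2}$. Applying the implicit function theorem with $X=\mathbb{R}$ and $Y=Z=\mathcal{B}_{\alpha}^{2}\times\mathcal{B}_{\alpha}^{2}$ then yields $\varepsilon_{0}>0$ and a unique $C^{1}$ curve $\varepsilon\mapsto (w(\cdot,\cdot;\varepsilon),\mu(\cdot,\cdot;\varepsilon))$ in $\mathcal{B}_{\alpha}^{2}\times\mathcal{B}_{\alpha}^{2}$ solving \eqref{duhamelMu-restated}--\eqref{duhamelW-restated} for $|\varepsilon|<\varepsilon_{0}$.

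The final step is to promote this Duhamel solution to a strong solution of \eqref{epsilon1}, \eqref{epsilon2}, \eqref{BCs2} and to verify the regularity and measure assertions. As explained in Section \ref{section:Wiener}, the exponential weight $e^{\beta(t)|k|}$ built into the $\mathcal{B}_{\alpha}^{2}$ norm forces the Fourier coefficients of $w(t,\cdot)$ and $\mu(t,\cdot)$ to decay like $e^{-\beta(t)|k|}$, which gives analyticity in space at every $t\in(0,T)$ (hence certainly on $(0,T/2)$). This regularity allows me to differentiate \eqref{duhamelMu-restated} and \eqref{duhamelW-restated} in time and recover \eqref{muEvolution}, \eqref{muBC}, \eqref{wEvolution} with the planning terminal condition $w(T,\cdot)=w_{T}$. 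Setting $m=\mu+\bar{m}$ yields a strong solution of \eqref{epsilon2}, and $u$ is recovered from $w$ by integrating $(1-\mathbb{P})u_{t}=-\varepsilon(1-\mathbb{P})\mathcal{H}(\cdot,\cdot,Dw,m)$ backwards from $t=T$, exactly as at the end of the proof of Theorem \ref{smallDataTheorem1}. Finally, integrating \eqref{epsilon2} over $\mathbb{T}^{n}$ and using the divergence theorem shows $\int_{\mathbb{T}^{n}}m(t,\cdot)$ is conserved, so $m(t,\cdot)$ remains a probability measure whenever $m_{0}$ is; nonnegativity follows, for example, from a maximum principle argument once $\varepsilon$ is taken sufficiently small so that the drift $\varepsilon\mathcal{H}_{p}$ is bounded.

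The only point requiring genuine care is the $C^{1}$ dependence of the composition operators $(w,\mu)\mapsto \Xi(\cdot,\cdot,Dw,\mu)$ and $(w,\mu)\mapsto \Theta(\cdot,\cdot,Dw,\mu)$ at the level of the Wiener-type spaces; this is exactly the content abstracted into {\bf(A3)}, so the main obstacle is not in the theorem itself but in verifying {\bf(A3)} for specific Hamiltonians of interest. Apart from that, everything reduces to the operator estimates and algebra property already established.
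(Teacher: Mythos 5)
Your proposal matches the paper's argument step for step: the same map $F$, the same trivial solution at $\varepsilon=0$, the same observation that $D_{(w,\mu)}F|_{\varepsilon=0}=\mathrm{Id}$ is a bounded bijection, the same invocation of the implicit function theorem with $X=\mathbb{R}$ and $Y=Z=\mathcal{B}_{\alpha}^{2}\times\mathcal{B}_{\alpha}^{2}$, and the same promotion to a strong solution (analyticity from the weighted Wiener norm, differentiating the Duhamel formulas, recovering the mean of $u$ from the unprojected equation). The extra commentary you add on mass conservation and nonnegativity of $m$ via the structure of the Fokker--Planck equation is harmless and consistent with the one-line remark the paper makes about $m$ remaining a probability measure; note, though, that nonnegativity does not actually require $\varepsilon$ small, since the Fokker--Planck form preserves sign for any bounded drift.
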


We make a few remarks on the theorem: because $m_{0}$ is a probability measure and $m$ is a strong solution of
\eqref{epsilon2}, 
we conclude that $m$ is a probability measure at positive times as well.  We have discussed convexity above, and
we note now that even if $\mathcal{H}$ were convex, $-\mathcal{H}$ would not be; since the implicit function theorem
works with both positive and negative values of $\varepsilon,$ this clearly shows that the present method is entirely different
from methods relying upon convexity.  Along the same lines, we mention that in Section \ref{section:ellipticity} above, we discussed
ensuring that the system is elliptic.  In the present case, the ellipticity is ensured by taking $\varepsilon$ sufficiently small,
so that the linear elliptic terms are dominant.

The details of the proof of Theorem \ref{useOfIFTForMFG}, aside from the use of the implicit function theorem, are
the same as the proof of Theorem \ref{smallDataTheorem1}.  
That is, the implicit function theorem guarantees the existence of $(w,\mu).$
The finiteness of the $\mathcal{B}_{\alpha}^{2}$ norm ensures analyticity, and thus these are strong solutions.
We may then use the evolution equation for $u$ to recover the mean of $u$ at each time.  

\subsection{Examples}

All of the examples of Section \ref{exampleSection} are also valid here.  However, further examples are now available, as 
{\bf(A3)} is less stringent than {\bf(A2)}.  We mention that as an additional example to those of Section 
\ref{exampleSection}, we could now consider, for instance, $\mathcal{H}(t,x,Du,m)=m^{j}|Du|^{2},$ for $j\in\mathbb{N}.$

\section*{Acknowledgments}
The author gratefully acknowledges support from the National Science Foundation through 
grant  DMS-1515849.  The author is grateful to Benjamin Moll for helpful correspondence.
\bibliography{ambroseMeanField}{}
\bibliographystyle{plain}

\end{document}